\theoremstyle{plain}
\newtheorem{theorem}{Theorem}
\newtheorem{lemma}{Lemma}
\newtheorem{corollary}{Corollary}
\newtheorem{proposition}{Proposition}
\theoremstyle{definition}
\newtheorem{remark}{Remark}
\newtheorem{example}{Example}
\newcommand{\B}{\mathbb}
\newcounter{nootje}
\newcommand\noot[1]
  \def\vhrulefill#1{\leavevmode\leaders\hrule\@height#1\hfill \kern\z@}
\begin{document}

\title{Transcendence tests for Mahler functions}
\author{Jason P. Bell}
\address{Department of Pure Mathematics, University of Waterloo, Waterloo, Canada}
\email{jpbell@uwaterloo.ca}

\author{Michael Coons}
\address{School of Math.~and Phys.~Sciences\\
University of Newcastle\\
Callag\-han\\
Australia}
\email{Michael.Coons@newcastle.edu.au}

\thanks{The research of J.~P.~Bell was supported by NSERC grant 31-611456 and the research of M.~Coons was supported by ARC grant DE140100223.}

\date{\today}

\keywords{Transcendence, Mahler functions, radial asymptotics}
\subjclass[2010]{Primary 11J91; Secondary 39A06, 30B30}%

\begin{abstract} We give two tests for transcendence of Mahler functions. For our first, we introduce the notion of the eigenvalue $\lambda_F$ of a Mahler function $F(z)$, and develop a quick test for the transcendence of $F(z)$ over $\mathbb{C}(z)$, which is determined by the value of the eigenvalue $\lambda_F$. While our first test is quick and applicable for a large class of functions, our second test, while a bit slower than our first, is universal; it depends on the rank of a certain Hankel matrix determined by the initial coefficients of $F(z)$. We note that these are the first transcendence tests for Mahler functions of arbitrary degree. Several examples and applications are given.
\end{abstract}

\maketitle

\section{Introduction}

A function $F(z)\in\B{C}[z]$ is called a {\em $k$-Mahler function} provided there are integers $k\geqslant 2$ and $d\geqslant 1$ such that \begin{equation}\label{MFE}a_0(z)F(z)+a_1(z)F(z^k)+\cdots+a_d(z)F(z^{k^d})=0,\end{equation} for some polynomials $a_0(z),\ldots,a_d(z)\in\B{C}[z]$. The minimal such positive integer $d$ is called the {\em degree} of the $k$-Mahler function $F(z)$.

These functions were introduced by Mahler \cite{M1929, M1930a, M1930b}, who was interested in the transcendence of several specific Mahler functions and their special values. The classical example is the Fredholm number $\sum_{n\geqslant 0} 2^{-2^n},$ the transcendence of which follows from the transcendence of the associated function $A(z)=\sum_{n\geqslant 0}z^{2^n}$ that satisfies the Mahler functional equation $$zA(z)-(z+1)A(z^2)+A(z^4)=0.$$ The function $A(z)$ is the canonical example of a function that has the unit circle as a natural boundary, and so it is transcendental; indeed, this example is included in Whittaker and Watson's classic text, {\em A Course of Modern Analysis} \cite[Section~5$\cdot$501]{WW1902}, which first appeared well-before the work of Mahler.

In a step towards classifying the diffeo-algebraic nature of Mahler functions, B\'ezivin \cite{B1994} showed that a Mahler function, which satisfies a homogeneous linear differential equation, is necessarily rational. Since algebraic functions satisfy such differential equations, his result shows that an irrational Mahler function cannot be algebraic. Thus in order to determine the transcendence of a Mahler function, one need only show irrationality. 

Until now, only ad hoc methods have been used to prove the transcendence of various Mahler functions. In this short paper, we rectify this situation by providing two tests for transcendence of Mahler functions. Our first test is formulated by associating a characteristic polynomial to a Mahler function. If this characteristic polynomial has distinct nonzero roots (in $\B{C}$), then there is a root $\lambda_F$ that is naturally associated to $F(z)$, which we call the {\em eigenvalue of $F(z)$}. Our first test is stated in Figure \ref{Fig}.

\begin{figure}[h]
\noindent\vhrulefill{1pt}

\noindent\textbf{Eigenvalue test for transcendence of Mahler functions.}

\vspace{-.2cm}
\noindent\vhrulefill{1pt}
\begin{flushleft}

\noindent Let $k\geqslant 2$ and $d\geqslant 1$ be  integers and $F(z)$ be a $k$-Mahler function converging inside the unit disc satisfying $$a_0(z)F(z)+a_1(z)F(z^k)+\cdots+a_d(z)F(z^{k^d})=0,$$ for polynomials $a_0(z),\ldots,a_d(z)\in\B{C}[z]$. Set $a_i:=a_i(1)$ and form the polynomial $$p_F(\lambda):=a_0\lambda^d+a_1\lambda^{d-1}+\cdots+a_{d-1}\lambda+a_d.$$

\vspace{.2cm}
\noindent If $a_0a_d\neq 0$ and $p_F(\lambda)$ has distinct roots, then the function $F(z)$ is transcendental over $\B{C}(z)$ provided
\vspace{.2cm}
\begin{itemize}
\item $p(k^n)\neq 0$ for all $n\in\B{Z}$ or
\item the eigenvalue $\lambda_F\neq k^n$ for any $n\in\B{Z}$.
\end{itemize}

\vspace{.2cm}
\noindent If $\lambda_F=k^n$ for some $n\in\B{Z}$, the test is inconclusive.
\end{flushleft}

\noindent\vhrulefill{1pt}
\vspace{-.2cm}
\caption{Eigenvalue test for transcendence of Mahler functions.}
\label{Fig}
\end{figure}

While our first test can be completed quickly, it applies only to Mahler functions satisfying certain conditions, such as analyticity in the unit disc. Our second test is unconditional; it is stated in Figure \ref{Fig2}.

\begin{figure}[h]
\noindent\vhrulefill{1pt}

\noindent\textbf{Universal test for transcendence of Mahler functions.}

\vspace{-.2cm}
\noindent\vhrulefill{1pt}
\begin{flushleft}

\noindent Let $k\geqslant 2$ and $d\geqslant 1$ be  integers and $F(z)$ be a $k$-Mahler function satisfying $$a_0(z)F(z)+a_1(z)F(z^k)+\cdots+a_d(z)F(z^{k^d})=0,$$ for polynomials $a_0(z),\ldots,a_d(z)\in\B{C}[z]$. Set $H:=\max\{\deg a_i(z): i=0,\ldots,d\}$ and $$\kappa:=\lfloor H(k-1)/(k^{d+1}-2k^d+1)\rfloor+\lfloor H/k^{d-1}(k-1)\rfloor+1.$$

\vspace{.2cm}
\begin{enumerate}
\item[Step 1.]  Compute the coefficient, $f(i)$, of $z^i$ of $F(z)$ for $$i=0,1,\ldots ,\kappa + H + \kappa (k^{d+1}-1)/(k-1).$$ 
\item[Step 2.] Form the $$(1+\kappa)\times (1+H + \kappa (k^{d+1}-1)/(k-1))$$ matrix ${\bf M}$ whose $(i,j)$-entry if $f(i+j-2)$.
\vspace{.1cm}
\item[Step 3.] Put this matrix in echelon form and verify whether it has full rank (i.e., rank
equal to $1+\kappa$).
\vspace{.1cm}
\item[Step 4.] If it does, then $F(z)$ is transcendental; otherwise it is rational.  
\end{enumerate} 
\end{flushleft}

\noindent\vhrulefill{1pt}
\vspace{-.2cm}
\caption{Universal test for transcendence of Mahler functions.}
\label{Fig2}
\end{figure}

This paper contains three further sections. In Section \ref{sec2}, we prove the validity of our eigenvalue transcendence test via a result on the radial asymptotics of Mahler functions, which itself is of independent interest. In Section \ref{sec3}, we focus on our universal transcendence test. In the last section, we list a few examples and remarks about the our two tests.

\begin{remark}
Our transcendence tests should be compared with two earlier results of Coons, \cite[Theorem 2.2]{C2012} and \cite[Theorem 3.1]{C2013}, which established transcendence tests for very specific families of Mahler functions of degree one and two.
\end{remark}

\section{Eigenvalues and transcendence of Mahler functions}\label{sec2}

Suppose that $F(z)$ is a $k$-Mahler function satisfying \eqref{MFE} that converges inside the unit disc and for $i=0,\ldots,d$ define $a_i:=a_i(1)$. Then the function $f(x):=F(z^{k^d})=F(e^{-k^dt})$, where $z=e^{-t}$ and $t=k^{-x}$,  satisfies the Poincar\'e difference equation $$\left(a_0+O(k^{-x})\right)f(x+d)+\cdots+\left(a_{d-1}+O(k^{-x})\right)f(x+1)+\left(a_d+O(k^{-x})\right)f(x)=0,$$ which has limiting characteristic polynomial \begin{equation}\label{LCP} p_F(\lambda):=a_0\lambda^d+a_1\lambda^{d-1}+\cdots+a_{d-1}\lambda+a_d.\end{equation}
If $a_0a_d\neq 0$ and $\lambda_1,\ldots,\lambda_d$ are distinct, a direct application of a theorem of Evgrafov~\cite{E1958} (see also \cite{E1953} for background) gives the existence of an eigenvalue $\lambda_F\in\{\lambda_1,\ldots,\lambda_d\}$ such that $$f(x)=\tilde{C}\lambda_F^x (1+o(1))$$ as $x\to\infty$ along $x\equiv x_0 (\bmod\ \B{Z})$ for some $\tilde{C}=\tilde{C}(x_0)\neq 0$. Note that $\tilde C(x)$ is a $1$-periodic real-analytic function in an interval $x>\sigma_0$ because of the analytic dependence of the solution of the difference equation on the initial data. 

This implies that as $t\to 0^+$ $$F(e^{-t})=\frac{\hat C(t)}{t^{\log_k \lambda_F}}(1+o(1)),$$ and further \begin{equation*} F(z)=\frac{C(z)}{(1-z)^{\log_k \lambda_F}}(1+o(1)),\end{equation*} as $z\to1^-$, where $C(z)$ is real-analytic and satisfies $C(z)=C(z^k)$ for $z\in(0,1)$. Here we have used $\log_k$ to denote the principal value of the base-$k$ logarithm.

In the notation of the previous paragraph, we have proved the following theorem.

\begin{theorem}\label{asymp} Let $F(z)$ be a $k$-Mahler function satisfying \eqref{MFE} whose characteristic polynomial $p_F(\lambda)$ has distinct roots. Then there is an eigenvalue $\lambda_F$ with $p_F(\lambda_F)=0$, such that as $z\to 1^-$ $$F(z)=\frac{C(z)}{(1-z)^{\log_k \lambda_F}} (1+o(1)),$$ where $\log_k$ denotes the principal value of the base-$k$ logarithm and $C(z)$ is a real-analytic nonzero oscillatory term, which on the interval $(0,1)$ is bounded away from $0$ and $\infty$, and satisfies $C(z)=C(z^k)$. 
\end{theorem}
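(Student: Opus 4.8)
The plan is to turn the functional equation \eqref{MFE}, evaluated near $z=1$, into a linear difference (Poincar\'e) equation with constant limiting coefficients, and then read off the radial asymptotics from the classical theory of such equations. Concretely, I would substitute $z=e^{-t}$ with $t\to 0^+$ and then set $t=k^{-x}$, so that $z^{k^j}=e^{-k^j t}=e^{-k^{j-d}\cdot k^d t}$; writing $f(x):=F(e^{-k^d t})=F(z^{k^d})$, the $j$-th term $a_j(z)F(z^{k^j})$ becomes $a_j(e^{-k^{-x}})f(x+j-d)$ — wait, more precisely with the normalization in the excerpt one gets $a_j(e^{-t})f(x+(d-j))$ up to re-indexing, so that \eqref{MFE} is exactly a difference equation of order $d$ in the variable $x$. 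Since each $a_j(z)\to a_j(1)=a_i$ as $z\to1^-$, i.e. as $x\to\infty$, the coefficients are $a_j+O(k^{-x})$, and the equation has the stated limiting characteristic polynomial $p_F(\lambda)=a_0\lambda^d+\cdots+a_d$ in \eqref{LCP}. This is essentially bookkeeping, but one must be careful with the direction of the index shift and with the hypothesis that the leading and trailing coefficients $a_0,a_d$ do not vanish at $1$, which is exactly what guarantees the limiting equation is genuinely of order $d$ with nonzero constant term (so $0$ is not a characteristic root).

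Next I would invoke Evgrafov's theorem \cite{E1958} on Poincar\'e-type difference equations: when the limiting characteristic polynomial has $d$ distinct roots $\lambda_1,\dots,\lambda_d$ and the perturbations of the coefficients are summable (here they are $O(k^{-x})$, which is summable along any arithmetic progression $x\equiv x_0\pmod{\B Z}$), every nonzero solution behaves like $\tilde C\,\lambda_F^{x}(1+o(1))$ for some characteristic root $\lambda_F$ and some constant $\tilde C=\tilde C(x_0)\neq 0$ depending on the residue class. I would note that $F$ is nonzero (else there is nothing to prove, or one excludes it), that the solution $f$ therefore is a nonzero solution, and hence some such $\lambda_F$ exists; a priori $\lambda_F$ and $\tilde C$ could depend on the progression, but because the solution of the difference equation depends real-analytically on the initial data (given on a fundamental interval of length $d$ in $x$), $\tilde C$ extends to a real-analytic function $\tilde C(x)$ on some ray $x>\sigma_0$, and $\lambda_F$ must be constant — the dominant root cannot jump. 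This continuity/analytic-dependence argument, and the verification that Evgrafov's hypotheses genuinely apply in each residue class uniformly enough to glue, is the step I expect to be the main obstacle: one has to be sure the asymptotic is uniform in $x_0$ and that the "oscillatory constant" really is a well-defined function rather than a family of unrelated constants.

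Finally I would unwind the substitutions. From $f(x)=\tilde C(x)\lambda_F^{x}(1+o(1))$ and $x=\log_k(1/t)=-\log_k t$ we get $\lambda_F^{x}=t^{-\log_k\lambda_F}$, so $F(e^{-t})=F(z^{k^d})|_{\text{appropriate }t}$ gives $F(e^{-t})=\hat C(t)\,t^{-\log_k\lambda_F}(1+o(1))$ as $t\to0^+$, where $\hat C(t):=\tilde C(-\log_k t)$ inherits the $1$-periodicity of $\tilde C$ in $x$, i.e. $\hat C(t)=\hat C(t^k)$ — wait, periodicity $x\mapsto x+1$ corresponds to $t\mapsto t/k$, so more carefully $\hat C$ satisfies a scaling relation which after the change of variable $z=e^{-t}$ (so $1-z\sim t$ as $z\to1^-$) turns into $C(z)=C(z^k)$ for the function $C(z):=\hat C(1-z)\cdot(1+o(1))$-corrected factor; here one uses $t=-\log z\sim 1-z$ and absorbs the discrepancy between $t^{-\log_k\lambda_F}$ and $(1-z)^{-\log_k\lambda_F}$ into the $(1+o(1))$ and into $C$. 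The claims that $C(z)$ is real-analytic, nonzero, and bounded away from $0$ and $\infty$ on $(0,1)$ follow because $\tilde C(x)$ is real-analytic, nonvanishing, and $1$-periodic, hence attains a positive minimum and finite maximum on a period and these bounds propagate by the functional equation $C(z)=C(z^k)$; using the principal branch of $\log_k$ fixes the meaning of $(1-z)^{\log_k\lambda_F}$ when $\lambda_F\notin\B R_{>0}$. I would present the chain of substitutions as the routine part, flag Evgrafov's theorem plus the analytic-dependence gluing as the crux, and thereby obtain exactly the statement of Theorem \ref{asymp}.
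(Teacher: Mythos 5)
Your proposal is correct and follows exactly the paper's argument: the same change of variables $z=e^{-t}$, $t=k^{-x}$, $f(x)=F(z^{k^d})$ converting \eqref{MFE} to a Poincar\'e difference equation, the same appeal to Evgrafov's theorem to obtain $f(x)=\tilde C(x_0)\lambda_F^x(1+o(1))$ along residue classes, the same analytic-dependence-on-initial-data remark to justify $1$-periodicity and real-analyticity of $\tilde C$, and the same unwinding of substitutions to reach $C(z)/(1-z)^{\log_k\lambda_F}$. Your flagging of the gluing-across-residue-classes step as the crux is apt, since the paper likewise dispatches it with only the brief analytic-dependence observation.
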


\begin{remark} Questions about the asymptotic behaviour of Mahler functions are quite classical, and some special cases of Theorem \ref{asymp} are known. See, e.g., Mahler~\cite{M1940}, de Bruijn \cite{dB1948}, Dumas \cite{D1993These}, Dumas and Flajolet \cite{DF1996}, and most recently Brent, Coons, and Zudilin \cite{BCZ2015}.
\end{remark}

Our next result proves the validity of our eigenvalue transcendence test; it is a near-immediate corollary of Theorem~\ref{asymp}. 

\begin{theorem}\label{main} Let $F(z)$ be a $k$-Mahler function converging inside the unit disc. If the eigenvalue $\lambda_F$ exists and is not an integral power of $k$, then $F(z)$ is transcendental over $\B{C}(z)$.
\end{theorem}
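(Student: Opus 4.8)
The plan is to derive transcendence from the singular behaviour described in Theorem~\ref{asymp}, using B\'ezivin's theorem that an irrational Mahler function cannot be algebraic; thus it suffices to prove that $F(z)$ is irrational. Suppose for contradiction that $F(z) \in \B{C}(z)$. Then $F(z)$ can be written as $P(z)/Q(z)$ with $P, Q \in \B{C}[z]$ coprime. Since $F(z)$ converges inside the unit disc, $Q$ has no zeros in the open unit disc; but $Q$ may still vanish at $z=1$. Near $z = 1^-$ such a rational function has the asymptotic form $F(z) \sim c\,(1-z)^{-m}$ for some integer $m \in \B{Z}$ (namely $m$ is the order of the zero of $Q$ at $1$ minus the order of the zero of $P$ at $1$), and here $c \in \B{C}^\times$ is a genuine constant, not an oscillating term.

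First I would invoke Theorem~\ref{asymp}: because the eigenvalue $\lambda_F$ exists, we have as $z \to 1^-$
\[
F(z) = \frac{C(z)}{(1-z)^{\log_k \lambda_F}}(1 + o(1)),
\]
with $C(z)$ bounded away from $0$ and $\infty$ on $(0,1)$ and satisfying $C(z) = C(z^k)$. Comparing this with the rational asymptotic $F(z) \sim c (1-z)^{-m}$, I would match exponents: since $C(z)$ is bounded between positive constants, the order of growth of $F(e^{-t})$ as $t \to 0^+$ is exactly $t^{-\mathrm{Re}(\log_k \lambda_F)}$ up to bounded factors, so we must have $\mathrm{Re}(\log_k \lambda_F) = m \in \B{Z}$ and moreover $\log_k \lambda_F$ must be real (otherwise $(1-z)^{\log_k \lambda_F}$ carries an unbounded oscillatory factor $(1-z)^{i\,\mathrm{Im}(\cdot)}$ incompatible with a constant $c$). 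Hence $\log_k \lambda_F = m \in \B{Z}$, i.e. $\lambda_F = k^m$, contradicting the hypothesis that $\lambda_F$ is not an integral power of $k$. This contradiction shows $F(z)$ is irrational, and then B\'ezivin's theorem upgrades irrationality to transcendence.

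The step I expect to require the most care is the rigorous comparison of the two asymptotic descriptions — in particular, ruling out that the oscillatory factor $C(z)/c$ could conspire with a non-integer or non-real exponent to still produce a rational function. The clean way to handle this is to argue at the level of the true growth rate: set $z = e^{-t}$ and compare $\log|F(e^{-t})| / \log(1/t)$ as $t \to 0^+$. For a rational $F$ this ratio tends to the integer $m$; from Theorem~\ref{asymp} it tends to $\mathrm{Re}(\log_k \lambda_F)$, forcing that real part to be an integer $m$. Then, knowing $\mathrm{Re}(\log_k\lambda_F)=m$, the function $(1-z)^m F(z)$ would be rational and tend, as $z\to 1^-$, to $C(z)(1-z)^{-i\,\mathrm{Im}(\log_k\lambda_F)}(1+o(1))$; a rational function has a genuine (possibly infinite or zero) limit at $1$, whereas the right-hand side has no limit unless $\mathrm{Im}(\log_k\lambda_F)=0$ and $C(z)$ tends to a constant. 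Since $\log_k$ is the principal value, $\mathrm{Im}(\log_k\lambda_F)=0$ together with $\mathrm{Re}(\log_k\lambda_F)=m$ gives exactly $\lambda_F=k^m$, completing the argument.
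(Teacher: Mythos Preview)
Your proposal is correct and follows essentially the same route as the paper: reduce transcendence to irrationality via B\'ezivin, then use the radial asymptotics of Theorem~\ref{asymp} to show that a rational $F$ would force $\log_k\lambda_F$ to be an integer. The paper compresses the comparison into the single observation that $(1-z)^{\beta}F(z)$ is bounded away from $0$ and $\infty$ with $\beta$ non-integral, while you spell out the matching of real and imaginary parts of $\beta$ separately; your added care in ruling out a complex $\beta$ (using the principal-value constraint together with $C(z)=C(z^k)$) is a genuine clarification of a point the paper handles only by the phrase ``cannot exhibit strange oscillatory behaviour.''
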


\begin{proof} We use B\'ezivin's result (discussed in the Introduction), so that we only need prove the irrationality of $F(z)$.

To this end, suppose that $F(z)$ is a $k$-Mahler function converging inside the unit disc, and that the eigenvalue $\lambda_F$ exists and is not an integral power of $k$. Then Theorem~\ref{asymp} implies that $$0<\liminf_{z\to 1^{-}}\ (1-z)^{\beta} F(z)\leqslant \limsup_{z\to 1^{-}}\ (1-z)^{\beta} F(z)<\infty,$$ where $\beta:=\log_k\lambda_F$ is not integral. This immediately implies that $F(z)$ is irrational, as a rational function can have only integral order zeros and poles, and cannot exhibit strange oscillatory behaviour. 
\end{proof}

Theorem \ref{asymp} asserts the existence of an eigenvalue associated to $F(z)$, which we call $\lambda_F$, and Theorem \ref{main} uses the value of $\lambda_F$ to give a transcendence result. Our next result of this section gives a way to calculate $\lambda_F$, so that one may apply the above results to specific examples.

\begin{proposition}\label{prop} Let $F(z)$ be a $k$-Mahler function, converging inside the unit disc, for which $\lambda_F$ exists. Then $$\lim_{z\to 1^-}\frac{F(z)}{F(z^k)}=\lambda_F.$$
\end{proposition}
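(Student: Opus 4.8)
The plan is to derive the limit directly from the radial asymptotic expansion established in Theorem~\ref{asymp}. By hypothesis $\lambda_F$ exists, so the characteristic polynomial $p_F(\lambda)$ has distinct roots and Theorem~\ref{asymp} applies: as $z\to 1^-$ we may write
\[
F(z)=\frac{C(z)}{(1-z)^{\log_k\lambda_F}}(1+o(1)),
\]
where $C(z)$ is real-analytic, bounded away from $0$ and $\infty$ on $(0,1)$, and satisfies the invariance $C(z)=C(z^k)$. The idea is to form the quotient $F(z)/F(z^k)$ and track how each of the three factors behaves.

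First I would substitute $z^k$ for $z$ in the expansion; since $z\to 1^-$ forces $z^k\to 1^-$ as well, Theorem~\ref{asymp} gives $F(z^k)=C(z^k)(1-z^k)^{-\log_k\lambda_F}(1+o(1))$. Dividing, the oscillatory factors cancel exactly because $C(z)=C(z^k)$, leaving
\[
\frac{F(z)}{F(z^k)}=\left(\frac{1-z^k}{1-z}\right)^{\log_k\lambda_F}\cdot\frac{1+o(1)}{1+o(1)}.
\]
Now $1-z^k=(1-z)(1+z+\cdots+z^{k-1})$, so $(1-z^k)/(1-z)=1+z+\cdots+z^{k-1}\to k$ as $z\to 1^-$. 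Hence the bracketed factor tends to $k^{\log_k\lambda_F}=\lambda_F$, and the ratio of the $(1+o(1))$ terms tends to $1$ (here one uses that $F(z^k)$ is bounded away from $0$ near $z=1$, which is exactly what the lower bound on $C$ and the nonintegral-or-not-withstanding nonvanishing asymptotic guarantee, so the quotient is well-defined for $z$ close to $1$). Combining these gives $\lim_{z\to 1^-}F(z)/F(z^k)=\lambda_F$.

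The main obstacle — really the only subtlety — is making sure the $o(1)$ error terms genuinely behave as claimed when one takes the quotient, i.e.\ that dividing the asymptotic for $F(z)$ by that for $F(z^k)$ is legitimate. This requires knowing $F(z^k)\neq 0$ for $z$ in a left-neighbourhood of $1$, which follows from Theorem~\ref{asymp} since $C$ is bounded away from $0$ there and the factor $(1-z^k)^{-\log_k\lambda_F}$ is nonzero; and it requires that the $o(1)$ in the expansion is uniform enough that $(1+o(1))/(1+o(1))\to 1$ — but this is immediate once the denominator is bounded away from $0$. One mild point to note is that $\log_k\lambda_F$ denotes the principal value, so $k^{\log_k\lambda_F}=\lambda_F$ holds as stated (no spurious branch ambiguity), which is why the limit comes out to $\lambda_F$ itself rather than some other power.
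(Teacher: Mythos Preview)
Your proof is correct and follows essentially the same route as the paper: both apply Theorem~\ref{asymp} to $F(z)$ and $F(z^k)$, use the invariance $C(z)=C(z^k)$ to cancel the oscillatory factor, and reduce the quotient to $\bigl((1-z^k)/(1-z)\bigr)^{\log_k\lambda_F}\to k^{\log_k\lambda_F}=\lambda_F$ via the factorization $1-z^k=(1-z)(1+z+\cdots+z^{k-1})$. Your additional remarks on the nonvanishing of $F(z^k)$ near $z=1$ and the legitimacy of dividing the $1+o(1)$ terms are fine and just make explicit what the paper leaves implicit.
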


\begin{proof} This follows directly from Theorem \ref{asymp}. To see this, note that using the identity  $$1-z^k=(1-z)(1+z+\cdots+z^{k-1})=(1-z)k(1+o(1)),$$ valid as $z\to 1^-$,  applying Theorem \ref{asymp} to $F(z^k)$ gives $$F(z^k)=\frac{C(z^k)}{(1-z^k)^{\log_k \lambda_F}} (1+o(1))=\frac{1}{k^{\log_k\lambda_F}}\cdot\frac{C(z)}{(1-z)^{\log_k \lambda_F}} (1+o(1)).$$ Thus $$\frac{F(z)}{F(z^k)}=k^{\log_k\lambda_F}(1+o(1))=\lambda_F(1+o(1)),$$ which is the desired result.
\end{proof}

This proposition gives the following corollary.

\begin{corollary}\label{cor} Let $F(z)\in\B{R}[[z]]$ be a $k$-Mahler function, converging inside the unit disc, for which $\lambda_F$ exists. Then $\lambda_F\in\B{R}$.
\end{corollary}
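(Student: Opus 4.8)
The statement to prove is Corollary~\ref{cor}: if $F(z)\in\B{R}[[z]]$ is a $k$-Mahler function converging in the unit disc for which $\lambda_F$ exists, then $\lambda_F\in\B{R}$.

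\medskip

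The plan is to deduce this directly from Proposition~\ref{prop}, which identifies $\lambda_F$ as the limit $\lim_{z\to 1^-} F(z)/F(z^k)$. First I would observe that since $F(z)$ has real coefficients, its restriction to the real interval $(0,1)$ is a real-valued function; in particular, for $z\in(0,1)$ we have $F(z)\in\B{R}$ and $F(z^k)\in\B{R}$, so the ratio $F(z)/F(z^k)$ is real-valued for all $z\in(0,1)$ sufficiently close to $1$ (we need $F(z^k)\neq 0$ near $z=1$, which follows from Theorem~\ref{asymp}: the asymptotic $F(z)\sim C(z)(1-z)^{-\log_k\lambda_F}$ with $C(z)$ bounded away from $0$ on $(0,1)$ forces $F$ to be nonzero on a left-neighbourhood of $1$). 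Then $\lambda_F$ is obtained as a limit of real numbers as $z\to 1^-$ along the real axis, and since $\B{R}$ is closed in $\B{C}$, the limit $\lambda_F$ lies in $\B{R}$.

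\medskip

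The only point requiring a little care is making sure the limit in Proposition~\ref{prop} is genuinely taken through real values of $z$ approaching $1$ from below --- but this is exactly how the proposition (and Theorem~\ref{asymp}, via the substitution $z=e^{-t}$ with $t\to 0^+$) is set up, so there is nothing extra to check. I do not expect any serious obstacle here; the corollary is a short observation combining the reality of the coefficients with the real-variable nature of the limit already established. If one wanted to be maximally explicit, one could alternatively note that $p_F(\lambda)$ has real coefficients (since each $a_i=a_i(1)$ is real when the $a_i(z)$ are chosen with real coefficients, which one may assume as $F\in\B{R}[[z]]$ satisfies a minimal-degree equation definable over $\B{R}$), so its roots come in complex-conjugate pairs; combined with the fact that $\lambda_F$ equals a real limit, $\lambda_F$ must be one of the real roots. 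But the direct argument via Proposition~\ref{prop} is cleanest and is the one I would present.
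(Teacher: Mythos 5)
Your proof is correct and takes the same route the paper implicitly intends: the paper states the corollary immediately after Proposition~\ref{prop} with the remark that the proposition ``gives'' it, and the intended argument is exactly yours --- for $z\in(0,1)$ the ratio $F(z)/F(z^k)$ is real since $F$ has real coefficients, so the limit $\lambda_F$ is real. Your added remark that $F(z^k)\neq 0$ near $z=1$ (guaranteed by Theorem~\ref{asymp}) is a sensible detail to make explicit.
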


Corollary \ref{cor} should be compared to Perron's Theorem and the Perron-Frobenius Theorem regarding the spectral radius of a real matrix with positive or nonnegative entries; see \cite{F1908, F1909, F1912, P1907a, P1907b}.

\section{Universal test for transcendence of Mahler functions}\label{sec3}

As recalled in the Introduction, a result of B\'ezivin \cite{B1994} allows one to determine transcendence of a Mahler function by proving only irrationality. So let's suppose that we have a rational solution to \eqref{MFE}. What can we say then? Our first result of this section gives bounds on the degrees of the numerator and the denominator of a rational Mahler function.

\begin{proposition} Let $F(z)=P(z)/Q(z)$ be a rational $k$-Mahler function satisfying \eqref{MFE} with $\gcd(P(z),Q(z))=1$ and set $H:=\max\{\deg a_i(z):i=0,\ldots,d\}.$ Then $$\deg Q(z)\leqslant \lfloor H(k-1)/(k^{d+1}-2k^d+1)\rfloor,$$ and $$\deg P(z)\leqslant \deg Q(z)+\lfloor H/k^{d-1}(k-1)\rfloor.$$
\end{proposition}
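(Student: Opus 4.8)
The plan is to substitute $F(z) = P(z)/Q(z)$ with $\gcd(P,Q)=1$ into the Mahler equation \eqref{MFE}, clear denominators, and compare degrees. Writing the functional equation as
$$ a_0(z) \frac{P(z)}{Q(z)} + a_1(z)\frac{P(z^k)}{Q(z^k)} + \cdots + a_d(z)\frac{P(z^{k^d})}{Q(z^{k^d})} = 0, $$
multiply through by $\prod_{i=0}^d Q(z^{k^i})$ to obtain a polynomial identity
$$ \sum_{i=0}^d a_i(z)\, P(z^{k^i}) \prod_{j \neq i} Q(z^{k^j}) = 0. $$
The first step is to extract from this identity the crucial divisibility constraint on $Q$. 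The term $i=0$ reads $a_0(z) P(z) \prod_{j\geqslant 1} Q(z^{k^j})$, while every other term ($i\geqslant 1$) is divisible by $Q(z^k)$. Hence $Q(z^k)$ divides $a_0(z)P(z)\prod_{j\geqslant 1}Q(z^{k^j})$. Since $\gcd(P(z),Q(z))=1$, one argues (via the substitution $z\mapsto z^k$, or by looking at roots) that $\gcd(P(z^k),Q(z^k))=1$, and moreover that $\gcd(Q(z^k), Q(z^{k^j}))$ for $j\geqslant 2$ is controlled — a root $\zeta$ of $Q(z^k)$ is a $k$-th root of a root of $Q$, and for it to also be a root of $Q(z^{k^j})$ one needs $\zeta^{k^j}$ to be a root of $Q$. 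The cleanest route is to localize at a root $\zeta$ of $Q(z^k)$ of maximal modulus among all roots of $\prod_i Q(z^{k^i})$, forcing the power of $(z-\zeta)$ in $Q(z^k)$ to divide $a_0(z)$; this yields $\deg Q(z^k) \le \deg a_0(z) + (\text{contribution bounded by }\deg Q \text{ from the other }Q\text{-factors at smaller scales})$. Iterating this scaling argument across the $d+1$ levels $z, z^k,\dots,z^{k^d}$ and summing a geometric series in $1/k$ produces $k\deg Q \le H + \deg Q\cdot(\text{something}<k)$, which rearranges to the stated bound $\deg Q \le \lfloor H(k-1)/(k^{d+1}-2k^d+1)\rfloor$. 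I expect the geometric bookkeeping here to need care: one must track, for each $Q(z^{k^i})$, which of its roots can be "absorbed" by roots of lower-level factors versus which must be killed by $a_i(z)$, and the constant $k^{d+1}-2k^d+1 = k^d(k-2)+1$ in the denominator strongly suggests the sum telescopes as $k^d + k^{d-1}+\cdots$ minus an overlap term.

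For the bound on $\deg P$, the plan is to compare the degrees of the highest-order terms in the cleared identity directly. The term $i=d$ has degree $\deg a_d + k^d \deg P + \sum_{j<d} k^j \deg Q$, and this leading behaviour must be cancelled by the other terms, the largest competitor being $i=d-1$ with degree $\le H + k^{d-1}\deg P + k^d \deg Q + \sum_{j<d-1}k^j \deg Q$. Equating the dominant powers of $z$ gives $k^d\deg P \le H + k^{d-1}\deg P + (k^d - k^{d-1})\deg Q + \cdots$, i.e. roughly $k^{d-1}(k-1)\deg P \le H + k^{d-1}(k-1)\deg Q$, whence $\deg P \le \deg Q + \lfloor H/k^{d-1}(k-1)\rfloor$.

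The main obstacle will be making the divisibility argument for $Q$ rigorous: the naive claim "$Q(z^k) \mid a_0(z)$" is false because $Q(z^k)$ and $Q(z^{k^j})$ can share roots. The right framework is to order the roots of $\prod_{i=0}^d Q(z^{k^i})$ by modulus (roots of $Q$ all lie in $|z|\le 1$ or can be taken so after noting $F$ converges in the unit disc, though in fact the argument should be modulus-free and purely about the multiplicity of a fixed root $\zeta$ across the various factors $Q(z^{k^i})$). Fixing the root $\zeta$ of largest modulus $>$ some threshold, only finitely many $k^i$ can map it back into the root set of $Q$, and chasing the multiplicities through the identity pins down how much of $\deg Q(z^k)$ is forced into $a_0$. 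This is where the precise floor-function constants come from, and it is the step I would write out most carefully; the degree comparison for $P$ is then routine once $\deg Q$ is bounded.
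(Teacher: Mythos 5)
Your plan for bounding $\deg Q$ is on the wrong track, for two reasons. First, the divisibility claim is false as stated: in the cleared identity $\sum_{i=0}^{d}a_i(z)P(z^{k^i})\prod_{j\neq i}Q(z^{k^j})=0$, the $i=1$ term is $a_1(z)P(z^k)\prod_{j\neq 1}Q(z^{k^j})$, which is \emph{not} divisible by $Q(z^k)$; meanwhile the $i=0$ term already contains $Q(z^k)$ as an explicit factor, so the conclusion ``$Q(z^k)$ divides $a_0(z)P(z)\prod_{j\geqslant 1}Q(z^{k^j})$'' is vacuous. Second, and more importantly, you are attacking the wrong end of the equation and then compensating with an unnecessary and fragile root-of-maximal-modulus argument followed by an unexecuted ``iteration.'' The paper's proof is one step: multiply \eqref{MFE} by $R(z):=\prod_{j=0}^{d-1}Q(z^{k^j})$ only. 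Then every term with $i<d$ is already a polynomial, so the remaining term forces $Q(z^{k^d})\mid a_d(z)P(z^{k^d})R(z)$, and coprimality gives $Q(z^{k^d})\mid a_d(z)R(z)$. No information about shared roots is needed at all, because $A\mid B$ already gives $\deg A\leqslant\deg B$; one reads off $k^d D\leqslant H+(1+k+\cdots+k^{d-1})D$ and rearranges to the stated bound. The moral you missed is that one should isolate the \emph{highest}-iterate denominator $Q(z^{k^d})$, which has the largest degree, so that the single resulting inequality is already sharp; your approach isolates a low-iterate factor and then has to recover the deficit by iteration and root-tracking, which you never carry out and which I do not believe closes cleanly.

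Your plan for $\deg P$ is genuinely different from the paper's and is salvageable. The paper instead reduces to the case of a pole at $\infty$ of order $M=\deg P-\deg Q>0$, rewrites \eqref{MFE} as $a_d(z)F(z^{k^d})=-\sum_{i<d}a_i(z)F(z^{k^i})$, and compares the pole order at $\infty$ on both sides ($\geqslant k^dM$ versus $\leqslant k^{d-1}M+H$), which gives $M\leqslant H/(k^d-k^{d-1})$ in two lines. Your degree comparison in the cleared polynomial identity can be made to work, but you must (i) reduce to the case $\deg P>\deg Q$ first (otherwise the claim is immediate from the $Q$-bound), (ii) check that the $i=d$ term then has strictly the largest degree among all terms unless some other term catches up, and (iii) take the comparison with $i=d-1$ since that minimizes $k^d-k^i$; as written your ``equating the dominant powers'' is too loose. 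The pole-at-$\infty$ formulation bypasses all of this bookkeeping and is the cleaner route.
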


\begin{proof} Write $F(z)=P(z)/Q(z)$ with $\gcd(P(z),Q(z))=1$.  Since $F(z)$ is a power series, $Q(0)\neq 0$.  
Then we have
$$\sum_{i=0}^d a_i(z)P(z^{k^i})/Q(z^{k^i})=0.$$  In particular, if we multiply both sides by 
$$R(z):=\prod_{j=0}^{d-1} Q(z^{k^j}),$$ we see that
$Q(z^{k^d})$ must divide $a_d(z)P(z^{k^d})R(z)$.  Since $\gcd(P(z),Q(z))=1$, we then have
that $Q(z^{k^d})$ divides $a_d(z)R(z)$.  Let $D$ denote the degree of $Q(z)$.  Then considering degrees, we have
$$k^d D \leqslant \deg a_d(z) + \deg R(z) \leqslant H + D+kD +\cdots + k^{d-1}D.$$
In other words, $(k^d-k^{d-1}-\cdots -1) D \leqslant H$.  Since
$$k^d - k^{d-1}-\cdots - 1 =  k^d - (k^d-1)/(k-1) \geqslant k^d (k-2)/(k-1),$$ if $k>2$, we have 
$$D\leqslant H(k-1)/k^d(k-2).$$  If $k=2$, then all we get is $D\leqslant H$.  In any case, setting
$$A(H,k,d):=\lfloor H(k-1)/(k^{d+1}-2k^d+1)\rfloor,$$ we have $D=\deg Q(z)\leqslant A(H,k,d)$.

Similarly, we can bound the degree of $P(z)$, but this is slightly more subtle.
Suppose that $F(z)=P(z)/Q(z)$ has a pole at $z=\infty$ of order $M>0$ with $Mk^{d-1} + H < M k^{d}$.  Since $F(z)$ satisfies \eqref{MFE}, we have
\begin{equation}\label{MFEminus}F(z^{k^d})a_d(z)=-\sum_{i=0}^{d-1} a_i(z) F(z^{k^i}).\end{equation} Now, the right-hand side of \eqref{MFEminus} has a pole at $z=\infty$ of order at most
$k^{d-1}M+H$ and the left-hand side of \eqref{MFEminus} has a pole at $z=\infty$ of order at least $k^d M$.  Since the equality \eqref{MFEminus} must hold, we conclude that $Mk^{d-1}+H\geqslant Mk^d$ and so $M\leqslant H/(k^d-k^{d-1}).$  In other words, \begin{equation*}\deg P(z)\leqslant \deg Q(z)+H/k^{d-1}(k-1).\qedhere\end{equation*}  
\end{proof}

While we can bound the degrees of the numerator and the denominator of a rational Mahler function, unfortunately, deciding whether a general power series is a rational function is still not effectively determinable. After all, one can imagine that the function is very close to some rational function and one must go very far out when looking at its coefficients to see that it is irrational. Fortunately, as we now show, deciding whether a Mahler function is a rational function is effective.

\begin{lemma}\label{zeros} Let $F(z)$ be a Mahler function satisfying \eqref{MFE}. If $P(z)/Q(z)$ is a rational function with $Q(0)\neq 0$ and the degrees of $P(z)$ and $Q(z)$ are strictly less than some positive integer $\kappa$, then 
$F(z)-P(z)/Q(z)$ is either identically zero or it has a nonzero coefficient of $z^i$ for some $i\leqslant H+\kappa\cdot k^{d+1}/(k-1)$. 
\end{lemma}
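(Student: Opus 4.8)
The plan is to argue by contradiction: suppose $G(z) := F(z) - P(z)/Q(z)$ is not identically zero but vanishes to order higher than $H + \kappa\cdot k^{d+1}/(k-1)$ at $z=0$; I will derive a self-contradictory lower bound on the order of vanishing. The key observation is that $G(z)$ itself satisfies an \emph{inhomogeneous} Mahler equation: applying the operator $a_0(z) + a_1(z)\,(\cdot)^k + \cdots + a_d(z)\,(\cdot)^{k^d}$ to $F$ gives $0$, so applying it to $G$ gives $-\sum_{i=0}^d a_i(z) P(z^{k^i})/Q(z^{k^i})$, a \emph{fixed} rational function $R(z)$ independent of how far out $G$ vanishes. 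Clearing the denominator $\prod_{j=0}^d Q(z^{k^j})$ and using $\deg P, \deg Q < \kappa$ together with $\deg a_i \le H$, one bounds the order of vanishing at $0$ of the numerator of $R(z)$ — and hence gets a ceiling on $\mathrm{ord}_0\big(a_0(z)G(z) + \cdots + a_d(z)G(z^{k^d})\big)$ — by something of size roughly $H + \kappa(k^{d+1}-1)/(k-1)$.

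Next I would extract the contradiction from the structure of the left-hand side $a_0(z)G(z) + a_1(z)G(z^k) + \cdots + a_d(z)G(z^{k^d})$. If $G$ vanishes to order exactly $m$ at $z=0$ (with $m$ assumed large), then the term $a_0(z)G(z)$ vanishes to order exactly $m$ — here I need $a_0(0) \neq 0$, or more carefully I should track $\mathrm{ord}_0 a_0$ — while each term $a_i(z)G(z^{k^i})$ for $i \ge 1$ vanishes to order at least $k^i m \ge k m > m$ (again modulo the order of $a_i$ at $0$, which is at most $H$). So the lowest-order term is the $i=0$ term, and the whole sum vanishes to order at most $m + H$ (allowing for cancellation-free domination once $km > m + H$, i.e. once $m$ is large enough, which holds since $m > \kappa \ge 1$ forces this for $k \ge 2$ after absorbing $H$). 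Combining: $m \le \mathrm{ord}_0(\text{LHS}) \le H + \kappa(k^{d+1}-1)/(k-1)$, but we assumed $m > H + \kappa\cdot k^{d+1}/(k-1) \ge H + \kappa(k^{d+1}-1)/(k-1)$, a contradiction.

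The main obstacle — and the step requiring genuine care rather than routine bookkeeping — is handling the orders of vanishing at $z=0$ of the polynomials $a_i(z)$: the clean picture "$i=0$ term dominates" can fail if $a_0(0) = 0$, and one must make sure the cancellation between $a_0(z)G(z)$ and the higher terms cannot conspire to push the order of vanishing of the sum up arbitrarily. The fix is to note that for $m$ large, $\mathrm{ord}_0(a_0(z)G(z)) = \mathrm{ord}_0 a_0 + m \le H + m$ is still strictly smaller than $\mathrm{ord}_0(a_i(z)G(z^{k^i})) \ge k^i m$ for each $i \ge 1$ as soon as $km > H + m$, equivalently $m > H/(k-1)$; so for such $m$ the $i=0$ term strictly dominates and there is no cancellation affecting the leading term, giving $\mathrm{ord}_0(\text{LHS}) = \mathrm{ord}_0 a_0 + m \ge m$. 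One should double-check that the threshold $m > H/(k-1)$ is comfortably below the target bound $H + \kappa\cdot k^{d+1}/(k-1)$ (it is, since $\kappa \ge 1$), so the dichotomy is genuine: either $m \le H + \kappa(k^{d+1}-1)/(k-1) < H + \kappa\cdot k^{d+1}/(k-1)$, or we are in the dominant-term regime and still get $m \le \mathrm{ord}_0(\text{LHS}) \le H + \kappa(k^{d+1}-1)/(k-1)$. Finally I would tidy the arithmetic to confirm the numerator of $\sum a_i(z)P(z^{k^i})/Q(z^{k^i})$, after clearing $\prod_{j=0}^{d}Q(z^{k^j})$, has degree at most $H + \kappa\sum_{j=0}^{d}k^j = H + \kappa(k^{d+1}-1)/(k-1)$, which is exactly the bound quoted (up to the harmless slack $(k^{d+1}-1)/(k-1) < k^{d+1}/(k-1)$ in the statement).
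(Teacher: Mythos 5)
Your proof is correct and takes essentially the same approach as the paper: apply the Mahler operator to $G = F - P/Q$, compare the high order of vanishing at $z=0$ of $\sum_i a_i(z) G(z^{k^i})$ with the bounded degree of the numerator of $\sum_i a_i(z) P(z^{k^i})/Q(z^{k^i})$ after clearing $\prod_j Q(z^{k^j})$ (whose constant term is nonzero because $Q(0)\neq 0$), and derive a contradiction. Your explicit check that the $i=0$ term strictly dominates once $m > H/(k-1)$ --- so that $\sum_i a_i(z)G(z^{k^i})$ has finite order of vanishing and cannot vanish identically, hence the cleared numerator is a nonzero polynomial --- makes precise the step the paper compresses into the words ``a contradiction'' (both arguments tacitly use $a_0 \not\equiv 0$).
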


\begin{proof} Suppose not. Then $F(z)-P(z)/Q(z)=z^M T(z)$ for some nonzero power series $T(z)$ with $T(0)$ nonzero and some $M>H+\kappa\cdot k^{d+1}/(k-1)$.  Then we have
\begin{equation}\label{FEpq}\sum_{i=0}^d a_i(z) P(z^{k^i})/Q(z^{k^i}) = \sum_{i=0}^d a_i(z) z^{Mk^i}T(z^{k^i}).\end{equation}
Notice the right-hand side of \eqref{FEpq} has a zero of at least order $M$ at $z=0$.  On the other hand, we can write the left-hand side of \eqref{FEpq} as a rational function with denominator $Q(z)Q(z^k)\cdots Q(z^{k^d})$ and numerator
$$\sum_{i=0}^d a_i(z) P(z^{k^i}) R_i(z),$$ where
$R_i(z):=\prod_{j\neq i} Q(z^{k^j})$.  Thus the numerator of the left-hand side of \eqref{FEpq} when written in lowest terms has degree at most $H+\kappa (k^d+\cdots +k+1)$.  But this can occur only if the left-hand side of \eqref{FEpq} is identically zero since $M>H+\kappa (k^{d+1}-1)/(k-1)$, a contradiction.
\end{proof}

We can now show that our universal transcendence test (see Figure \ref{Fig2} in the Introduction) is valid.

\begin{proof}[Proof of Universal test for transcendence of Mahler functions] Let ${\bf M}$ be the matrix formed in Step 2 of our universal transcendence test described in Figure~\ref{Fig2}. 

Suppose that ${\bf M}$ does not have full rank.  Then there is a nonzero row vector
${\bf q}:=[q_0,q_1,\ldots ,q_{\kappa}]$ such that ${\bf q}\cdot {\bf M}=0$.  In other words,
$$(q_{\kappa}+q_{\kappa-1} z + \cdots + q_0 z^{\kappa})F(z)$$ has the property that $0$ is the coefficient of $z^i$ for $i=\kappa,\ldots , \kappa+H+\kappa (k^{d+1}-1)/(k-1)$; that is, there is a polynomial $P(z)$ of degree less than $\kappa$ such that 
$$(q_{\kappa}+q_{\kappa-1} z + \cdots + q_0 z^{\kappa})F(z)-P(z)$$ has a zero of order at least
$\kappa+H+\kappa (k^{d+1}-1)/(k-1)$ at $z=0$. Then $P(z)$ must have an order of zero at $z=0$ that is at least as great as the order of zero of $Q(z):=q_{\kappa}+q_{\kappa-1} z + \cdots + q_0 z^{\kappa}$ at $z=0$.  This means that 
$P(z)/Q(z)$ can be reduced to be written as a ratio of polynomials of degree less than $\kappa$ with the denominator being nonzero at $z=0$ and such that $F(z)-P(z)/Q(z)$ has a zero at $z=0$ of order at least
$H+\kappa (k^{d+1}-1)/(k-1)$. Lemma \ref{zeros} gives then that $F(z)-P(z)/Q(z)$ is identically zero and hence $F(z)$ is rational. 

Conversely, if $F(z)$ is rational, then we write $F(z)=P(z)/Q(z)$ with the degree of $P(z)$ and $Q(z)$ less than $\kappa$ and use $Q(z)$ to provide a nonzero row vector ${\bf q}$ as above with ${\bf q}\cdot{\bf M}=0$.
\end{proof} 

\section{Some examples and remarks}\label{sec4}

Before giving some examples using both our tests, we consider the complexity of our universal transcendence test. Notice the functional equation \eqref{MFE} for $F(z)$ allows one to compute the coefficients of $F(z)$ up to $z^n$ in ${\rm O}(n)$ steps.  Row reduction of an $m\times n$ matrix can certainly be done in ${\rm O}(mn^2)$ steps, so in principle our universal transcendence test can be done in ${\rm O}(\kappa^3 k^{2d})={\rm O}(H^3 k^{2d})$ operations, where $\kappa$ and $H$ are as defined in Figure \ref{Fig2}. Of course, for small Mahler degrees, our eigenvalue test (when applicable) can be very fast, and sometimes even done by inspection (e.g., when $F(z)$ is a degree-$2$ Mahler function using the quadratic formula).

To begin our examples, we note that our eigenvalue transcendence test (via Theorem \ref{main}) implies the following general result for degree-$1$ Mahler functions. 

\begin{corollary}\label{main2}
Let $k\geqslant 2$ be a positive integer and $r(z)\in\B{C}(z)$ be convergent inside the unit disc with $r(1)$ defined and not equal to $k^n$ for some $n\in\B{Z}$. Then the infinite product $$F(z):=\prod_{n\geqslant 0}r(z^{k^n})$$ is transcendental over $\B{C}(z)$. 
\end{corollary}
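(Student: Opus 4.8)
The plan is to realize $F(z) = \prod_{n \geqslant 0} r(z^{k^n})$ as a degree-$1$ $k$-Mahler function and then read off its eigenvalue $\lambda_F$ in order to invoke Theorem~\ref{main}. First I would observe that the infinite product converges inside the unit disc: since $r$ is a rational function defined and analytic near $z = 0$ (convergent inside the unit disc) with $r(1)$ defined, we have $r(z) = r(0) + O(z)$, so for $|z| < 1$ the tail factors $r(z^{k^n})$ tend to $r(0)$ rapidly (the exponents $k^n$ grow geometrically), and the product defines a function analytic on the unit disc. A small nuisance to address: if $r(0) \neq 1$ the product of the constant terms diverges, so strictly one should factor $r(z) = r(0)\, s(z)$ with $s(0) = 1$, note $\prod_n r(0)$ makes sense only formally, and instead work directly with the telescoping functional equation below rather than worrying about normalization; alternatively one interprets the product in the standard way after dividing out the constant, but the functional equation is what matters.

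Next I would write down the defining functional equation. Directly from the product, $F(z) = r(z) \cdot \prod_{n \geqslant 1} r(z^{k^n}) = r(z)\, F(z^k)$, i.e.
\begin{equation*}
F(z) = r(z)\, F(z^k).
\end{equation*}
Writing $r(z) = b_1(z)/b_0(z)$ with $b_0, b_1 \in \B{C}[z]$ and $b_0(0) \neq 0$, this becomes $b_0(z) F(z) - b_1(z) F(z^k) = 0$, which is exactly \eqref{MFE} with $d = 1$, $a_0(z) = b_0(z)$, $a_1(z) = -b_1(z)$. Then $a_0 = b_0(1)$ and $a_1 = -b_1(1)$, and since $r(1) = b_1(1)/b_0(1)$ is defined we have $a_0 \neq 0$; moreover $a_1 = 0$ would force $r(1) = 0$, but then $r(1) = 0 = k^n$ is impossible for any $n \in \B{Z}$, so under our hypotheses $a_0 a_1 \neq 0$ unless $r(1) = 0$, in which case we may handle that case separately (if $r(1) = 0$, the behaviour is $F(z) \to 0$ and one argues transcendence directly, or simply notes the hypothesis $r(1) \neq k^n$ is vacuously compatible and the $d=1$ characteristic polynomial $a_0 \lambda + a_1 = a_0\lambda$ has unique root $0$, so $\lambda_F$ does not exist in the required sense — this edge case needs a sentence). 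The characteristic polynomial is $p_F(\lambda) = a_0 \lambda + a_1$, which has the single root $\lambda_F = -a_1/a_0 = b_1(1)/b_0(1) = r(1)$. A single root is trivially a set of distinct roots, so Theorem~\ref{asymp} applies and the eigenvalue of $F$ is precisely $\lambda_F = r(1)$.

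Finally I would apply Theorem~\ref{main}: since $F(z)$ is a $k$-Mahler function converging inside the unit disc whose eigenvalue $\lambda_F = r(1)$ exists and, by hypothesis, is not of the form $k^n$ for any $n \in \B{Z}$, the theorem gives that $F(z)$ is transcendental over $\B{C}(z)$. The only genuine obstacle I anticipate is the bookkeeping around degenerate cases — the convergence/normalization of the infinite product when $r(0) \neq 1$, and the possibility $r(1) = 0$ which makes the leading-style coefficient $a_1$ vanish — but both are cosmetic: the functional equation $F(z) = r(z) F(z^k)$ holds regardless, and one can either rule out $r(1) = 0$ as incompatible with the spirit of the hypothesis or dispatch it by a direct asymptotic argument as in the proof of Theorem~\ref{main}. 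Everything substantive is already packaged in Theorem~\ref{asymp} and Theorem~\ref{main}; the corollary is essentially the observation that a Mahler infinite product has $d = 1$ and eigenvalue equal to the value at $1$ of its multiplier.
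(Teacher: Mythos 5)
Your proposal is correct and follows precisely the route the paper intends (the paper gives no explicit proof, only the remark that the corollary falls out of the eigenvalue test): telescope the product to the degree-$1$ equation $F(z)=r(z)F(z^k)$, read off $p_F(\lambda)=a_0\lambda+a_1$ with root $\lambda_F=-a_1/a_0=r(1)$, and invoke Theorem~\ref{main}. You are also right to flag the two wrinkles — the normalization of the infinite product when $r(0)\neq 1$ (indeed in the paper's own Stern example $r(z)=(1+z+z^2)/z$ the literal product diverges, and the honest object is the solution of $F=rF(z^k)$), and the case $r(1)=0$.

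On the $r(1)=0$ case specifically: this is a genuine, if small, gap in the ``via Theorem~\ref{main}'' route, since then $a_1=0$, the hypothesis $a_0a_d\neq 0$ of the eigenvalue test fails, $p_F$'s only root is $0$, and $\log_k\lambda_F$ in Theorem~\ref{asymp} is meaningless; your two proposed workarounds are gestured at rather than carried out. The clean one-line fix is to bypass Theorem~\ref{main} entirely for that case: if $F=P/Q$ were rational (with zeros of order $m,\ell$ at $z=1$ in $P,Q$ respectively), then $r(z)=F(z)/F(z^k)=P(z)Q(z^k)/\bigl(P(z^k)Q(z)\bigr)$ and a direct computation gives $r(1)=k^{\,\ell-m}$, always a nonzero integral power of $k$. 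Hence $r(1)\notin\{k^n:n\in\B{Z}\}$ — which includes $r(1)=0$ — already forces $F$ irrational, and B\'ezivin gives transcendence. This also shows the corollary's hypothesis is exactly optimal. Substantively, though, your proof is the paper's proof, and your instinct that the edge cases ``need a sentence'' is sound; you just owe that sentence.
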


Setting $r(z)=(1+z+z^2)/z$, Corollary \ref{main2} provides a new proof of the transcendence of the generating function of the Stern sequence, which was first proved in~\cite{C2010}. 

We proceed with some examples of degree-$2$ and degree-$3$ Mahler functions. 

\begin{example} The canonical examples of degree-$2$ Mahler functions are the pair $F(z)$ and $G(z)$ defined by Dilcher and Stolarsky~\cite{DS2009}. That is, let $F(z)$ and $G(z)$ be the two $2$-Mahler functions of degree $2$ with coefficients in $\{0,1\}$ for which $$F(z)-(1+z+z^2)F(z^4)+z^4F(z^{16})=0,$$
and $$zG(z)-(1+z+z^2)G(z^4)+G(z^{16})=0.$$ 
Using our eigenvalue transcendence test, we note that in both cases the characteristic polynomial is $p_F(\lambda)=p_G(\lambda)=\lambda^2-3\lambda+1,$ which has roots $(3\pm\sqrt{5})/3$. As neither of these roots is a power of $2$, both $F(z)$ and $G(z)$ are transcendental. This result was previous proved by us \cite{C2010} and independently by Adamczewski \cite{A2010}.

Repeating using our universal transcendence test for $F(z)$ as defined in the previous paragraph, we have $k=4$, $d=2$, and $H=4$. We get 
$$\kappa =\lfloor 12/33 \rfloor +\lfloor 4/12\rfloor +1=1.$$ Thus we must check the rank of the $2\times 26$ matrix ${\bf M}$ and see whether or not it has rank $2$. Computing just first three columns we see that $${\bf M}=\left[\begin{array}{rrrr} 1&1&1&\cdots \\ 1&1&0&\cdots \end{array}\right],$$ and so ${\bf M}$ has full rank and thus $F(z)$ is transcendental. Our universal transcendence test for the function $G(z)$ is done similarly.
\end{example}

We provide two more results of degree-$2$ Mahler functions that are of interest to communities in theoretical computer science and combinatorics on words. Both examples contain $2$-automatic sequences whose generating functions are $2$-Mahler functions. In both cases, transcendence is known, and can be deduced from results of Adamczewski and Bugeaud \cite{AB2007}, Adamczewski and Rivoal \cite{AR2009}, or even from more na\"ive approaches.

\begin{example} The {\em Rudin-Shapiro sequence} is given by the recurrences $r_0=1,$ $r_{2n}=r_n$, $r_{4n+1}=r_n$, and $r_{4n+3}=-r_{2n+1}.$ The sequence $\{r_n\}_{n\geqslant 0}$ is $2$-automatic and as a consequence of the above relations, its generating function $R(z)=\sum_{n\geqslant 0}r_n z^n$ satisfies the $2$-Mahler equation $$R(z)-(1-z)R(z^2)-2zR(z^4)=0.$$ The function $R(z)$ has the  characteristic polynomial $p_R(\lambda)=\lambda^2-2.$ Since the values of $r_n$ are $\pm1$, we have that the root corresponding to $R(z)$ is $\lambda_R=\sqrt{2}$, which is not a power of $2$. Transcendence of $R(z)$ now follows from Theorem \ref{main}.

To apply our universal transcendence test, we see $k=2$, $d=2$, $H=1$, and we get $\kappa =2$.  Thus we look at the associated $3\times 16$ matrix and determine whether or not it has rank $3$. Looking at the first $3$ columns, we have
$${\bf M}= \left[\begin{array}{rrrr} 1 & 1 & 1 & \cdots \\ 1 & 1 & -1 &\cdots \\ 1&-1 & 1&\cdots \end{array}\right],$$
which has full rank and so we have transcendence of $R(z)$.
\end{example}

\begin{example} The {\em Baum-Sweet sequence} is given by the recurrences $b_0=1,$ $b_{4n}=b_{2n+1}=b_n$, and $b_{4n+2}=0.$ The sequence $\{b_n\}_{n\geqslant 0}$ is also $2$-automatic and as a consequence of the above relations, its generating function $B(z)=\sum_{n\geqslant 0}b_n z^n$ satisfies the $2$-Mahler equation $$B(z)-zB(z^2)-B(z^4)=0.$$ The function $B(z)$ has the characteristic polynomial $p_B(\lambda)=\lambda^2-\lambda-1.$ Since the values of $b_n$ are $0$ or $1$, we have that $\lambda_B=(1+\sqrt{5})/2$.  Theorem \ref{main} implies that $B(z)$ is transcendental.

Our universal transcendence test is completed similar to the Rudin-Shapiro example; we leave it for the interested reader.
\end{example}

The following is an example concerning a Mahler function of degree~$3$. This example seems to be the first transcendence result in the literature for a Mahler function of degree larger than $2$.

\begin{example} Consider the following example of Dumas \cite[Example~64]{D1993These} of a $2$-Mahler function of degree $3$; that is, consider the function $H(z)$ beginning $$H(z)=z+z^2+2z^3+z^4+5z^5+2z^6+7z^7+z^8+9z^9+5z^{10}+\cdots,$$ which satisfies \begin{multline*} z^3H(z)+z^2(2z^4-2z^2-z-2)H(z^2)\\-(4z^8+5z^6+z^4-2z^2-1)H(z^4)\\+(4z^8+2z^4-1-6z^{12})H(z^8)=0.\end{multline*} The function $H(z)$ has the characteristic polynomial $p_H(\lambda)=\lambda^3-3\lambda^2-7\lambda-1$, which has distinct roots. Moreover, we see that $\lambda_H\approx 4.577089445$, which is not a power of $2$, and so $H(z)$ is transcendental.

To apply our universal transcendence test here one has $H=12$, $k=2$, $d=3$, and so since $\kappa=15$ one needs to compute a $16\times 253$ matrix.  While, as before, one could probably get by with many fewer columns, our eigenvalue test works much faster in this case.  
\end{example}

We end our paper with the canonical example of a degree-$1$ Mahler function, and a function for which our eigenvalue transcendence test is not applicable.

\begin{example} The {\em Thue-Morse sequence} $\{t(n)\}_{n\geqslant 0}$ is given by $t(n)=(-1)^{s_2(n)}$, where $s_2(n)$ is the number of ones in the binary expansion of $n$. The generating function $T(z)=\sum_{n\geqslant 0} t(n)z^n$ satisfies $$T(z)-(1-z)T(z^2)=0.$$ Here $p_T(z)=\lambda+0,$ so since $a_1=0$, our eigenvalue transcendence test cannot be applied.

To apply our universal test, we note that $d=1$, $k=2$, and $H=1$. So $\kappa=2$, and we compute the rank of the $3\times 9$ matrix $${\bf M}=\left[\begin{array}{rrrrrrrrr} -1&1&-1&1&1&-1&-1&1&1 \\ 1&-1&1&1&-1&-1&1&1&-1 \\ -1&1&1&-1&-1&1&1&-1&1\end{array}\right].$$ Here one need only consider columns $2$ through $4$ to see that ${\bf M}$ has full rank, and indeed $T(z)$ is transcendental.
\end{example}

\bibliographystyle{amsplain}

\begin{thebibliography}{10}

\bibitem{A2010}
Boris Adamczewski, \emph{Non-converging continued fractions related to the
  {S}tern diatomic sequence}, Acta Arith. \textbf{142} (2010), no.~1, 67--78.
  \MR{2601050 (2011e:11123)}

\bibitem{AB2007}
Boris Adamczewski and Yann Bugeaud, \emph{On the complexity of algebraic
  numbers. {I}. {E}xpansions in integer bases}, Ann. of Math. (2) \textbf{165}
  (2007), no.~2, 547--565. \MR{2299740 (2008a:11130)}

\bibitem{AR2009}
Boris Adamczewski and Tanguy Rivoal, \emph{Irrationality measures for some
  automatic real numbers}, Math. Proc. Cambridge Philos. Soc. \textbf{147}
  (2009), no.~3, 659--678. \MR{2557148 (2010i:11110)}

\bibitem{B1994}
Jean-Paul B{\'e}zivin, \emph{Sur une classe d'\'equations fonctionnelles non
  lin\'eaires}, Funkcial. Ekvac. \textbf{37} (1994), no.~2, 263--271.
  \MR{1299865 (96a:39021)}

\bibitem{BCZ2015}
Richard~P. Brent, Michael Coons, and Wadim Zudilin, \emph{Algebraic
  independence of mahler functions via radial asymptotics}, Int. Math. Res.
  Not. IMRN, 33 pages, to appear.

\bibitem{C2010}
Michael Coons, \emph{The transcendence of series related to {S}tern's diatomic
  sequence}, Int. J. Number Theory \textbf{6} (2010), no.~1, 211--217.
  \MR{2641723 (2011i:11015)}

\bibitem{C2012}
\bysame, \emph{Extension of some theorems of {W}. {S}chwarz}, Canad. Math.
  Bull. \textbf{55} (2012), no.~1, 60--66. \MR{2932984}

\bibitem{C2013}
\bysame, \emph{Transcendental solutions of a class of minimal functional
  equations}, Canad. Math. Bull. \textbf{56} (2013), no.~2, 283--291.
  \MR{3043056}

\bibitem{dB1948}
N.~G. de~Bruijn, \emph{On {M}ahler's partition problem}, Nederl. Akad.
  Wetensch., Proc. \textbf{51} (1948), 659--669 = Indagationes Math. 10,
  210--220 (1948). \MR{0025502 (10,16d)}

\bibitem{DS2009}
Karl Dilcher and Kenneth~B. Stolarsky, \emph{Stern polynomials and double-limit
  continued fractions}, Acta Arith. \textbf{140} (2009), no.~2, 119--134.
  \MR{2558448 (2010j:11020)}

\bibitem{D1993These}
Philippe Dumas, \emph{R\'ecurrences mahl\'eriennes, suites automatiques,
  \'etudes asymptotiques}, Institut National de Recherche en Informatique et en
  Automatique (INRIA), Rocquencourt, 1993, Th{\`e}se, Universit{\'e} de
  Bordeaux I, Talence, 1993. \MR{1346304 (96g:11021)}

\bibitem{DF1996}
Philippe Dumas and Philippe Flajolet, \emph{Asymptotique des r\'ecurrences
  mahl\'eriennes: le cas cyclotomique}, J. Th\'eor. Nombres Bordeaux \textbf{8}
  (1996), no.~1, 1--30. \MR{1399944 (97f:39029)}

\bibitem{E1953}
M.~A. Evgrafov, \emph{A new proof of {P}erron's theorem}, Izvestiya Akad. Nauk
  SSSR. Ser. Mat. \textbf{17} (1953), 77--82. \MR{0058109 (15,323d)}

\bibitem{E1958}
\bysame, \emph{The asymptotic behaviour of a solution of difference equations},
  Dokl. Akad. Nauk SSSR \textbf{121} (1958), 26--29. \MR{0106369 (21 \#5102)}

\bibitem{F1908}
G.~Frobenius, \emph{\"{U}ber {M}atrizen aus positiven {E}lementen}, S.-B.
  Preuss. Akad. Wiss. (Berlin) (1908), 471--476.

\bibitem{F1909}
\bysame, \emph{\"{U}ber {M}atrizen aus positiven {E}lementen, {II}}, S.-B.
  Preuss. Akad. Wiss. (Berlin) (1909), 514--518.

\bibitem{F1912}
\bysame, \emph{\"{U}ber {M}atrizen aus nicht negativen {E}lementen}, S.-B.
  Preuss. Akad. Wiss. (Berlin) (1912), 456--477.

\bibitem{M1929}
Kurt Mahler, \emph{Arithmetische {E}igenschaften der {L}\"osungen einer
  {K}lasse von {F}unktionalgleichungen}, Math. Ann. \textbf{101} (1929), no.~1,
  342--366.

\bibitem{M1930a}
\bysame, \emph{Arithmetische {E}igenschaften einer {K}lasse
  transzendental-transzendenter {F}unktionen}, Math. Z. \textbf{32} (1930),
  no.~1, 545--585.

\bibitem{M1930b}
\bysame, \emph{Uber das {V}erschwinden von {P}otenzreihen mehrerer
  {V}er\"anderlichen in speziellen {P}unktfolgen}, Math. Ann. \textbf{103}
  (1930), no.~1, 573--587.

\bibitem{M1940}
\bysame, \emph{On a special functional equation}, J. London Math. Soc.
  \textbf{15} (1940), 115--123. \MR{0002921 (2,133e)}

\bibitem{P1907a}
O.~Perron, \emph{Grundlagen f\"ur eine {T}heorie des {J}acobischen
  {K}ettenbruchalgorithmus}, Math. Ann. \textbf{64} (1907), 1--76.

\bibitem{P1907b}
\bysame, \emph{Zur {T}heorie der {M}atrices}, Math. Ann. \textbf{64} (1907),
  248--263.

\bibitem{WW1902}
E.~T. Whittaker and G.~N. Watson, \emph{A course of modern analysis}, Cambridge
  Mathematical Library, Cambridge University Press, Cambridge, 1996, An
  introduction to the general theory of infinite processes and of analytic
  functions; with an account of the principal transcendental functions, Reprint
  of the fourth (1927) edition. \MR{1424469 (97k:01072)}

\end{thebibliography}
\def\cprime{$'$}
\providecommand{\bysame}{\leavevmode\hbox to3em{\hrulefill}\thinspace}
\providecommand{\MR}{\relax\ifhmode\unskip\space\fi MR }
\providecommand{\MRhref}[2]{%
  \href{http://www.ams.org/mathscinet-getitem?mr=#1}{#2}
}
\providecommand{\href}[2]{#2}


\end{document}